\newtheorem{theorem}{Theorem}[section]
\newtheorem{lemma}[theorem]{Lemma}
\theoremstyle{definition}
\newtheorem{notation}[theorem]{Notation}
\newtheorem{remark}[theorem]{Remark}
\begin{document}
	
\title[SOLVABILITY BY  The SUM OF ELEMENT ORDERS]{A Criterion for Solvability of a Finite \\ Group by the Sum of Element Orders}	
\author[  M. Baniasad, B. Khosravi ]{ Morteza Baniasad Azad \& Behrooz Khosravi }
\address{ Dept. of Pure  Math.,  Faculty  of Math. and Computer Sci. \\
Amirkabir University of Technology (Tehran Polytechnic)\\ 424,
Hafez Ave., Tehran 15914, Iran \newline }
\email{ baniasad84@gmail.com}
\email{ khosravibbb@yahoo.com}

\thanks{}
\subjclass[2000]{20D60, 20F16}

\keywords{Sum of element orders, solvable group, element orders.}

\begin{abstract}
		Let $G$ be a finite group and  $\psi(G) = \sum_{g \in G} o(g)$, where $o(g)$ denotes
		the order of $g \in G$.
	In [M. Herzog, et. al.,  Two new criteria for solvability of finite groups, J. Algebra, 2018],
	the authors  put forward the following conjecture:\\
	\textbf{Conjecture.}
\textit{If $G$ is a group of order $n$ and	$\psi(G)>211\psi(C_n)/1617 $, where $C_n$ is the cyclic group of order $n$,	then $G$ is solvable.}
	
	In this paper we prove the validity of this conjecture.
\end{abstract}

\maketitle
\section{\bf Introduction}
In this paper all groups are finite. The cyclic group of order $n$ is denoted by $C_n$.  Let $ \psi(G) = \sum_{g \in G} o(g)$,  the sum of element orders in a  group $G$.
The function $\psi(G)$ was introduced  by Amiri, Jafarian
and  Isaacs \cite{amiri2009sums}. 
We can see that $\psi(C_{p^{\alpha}})=\frac{p^{2\alpha+1}+1}{p+1}$, where $\alpha \in \mathbb{N}$.
In \cite{herzog2017exact}, an exact upper bound for sums of element orders in non-cyclic
finite groups is given. 
In \cite{herzog2018two}, the authors give two new criteria for solvability of finite groups.
They proved that, 	if $G$ is a group of order $n$ and
$\psi(G)\geq \psi(C_n)/6.68 $,
then $G$ is solvable.

From the observation that $A_5$ satisfies $\psi(A_5)=211$ and $\psi(C_{60})=1617$,  in \cite{herzog2018two} 
they put forward the following conjecture:\\
\textbf{Conjecture.} \textit{If $G$ is a group of order $n$ and
	$\psi(G) > \frac{211}{1617} \psi(C_n)$,
	then $G$ is solvable.}
As the main result of this paper we prove the validity of this conjecture.

 For the proof of this result, we need the following
 lemmas.
\begin{lemma}\cite[Corollary B]{amiri2009sums} \label{sumsylow}
	Let $P \in {\rm Syl}_p(G),$ and assume that $P \unlhd G$ and that $P$ is cyclic. Then
	$\psi(G) \leq \psi(P)\psi(G/P)$, with equality if and only if $P$ is central in $G$.
\end{lemma}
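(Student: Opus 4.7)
The plan is to partition $G$ into cosets of $P$ and bound each coset's contribution to $\psi(G)$ against $o(\bar g)\cdot\psi(P)$, where $\bar g$ is the image in $G/P$ of a chosen coset representative $g$. Write $P=\langle z\rangle$ with $|z|=p^a$, and encode the conjugation action of $g$ on $P$ by $r_g\in(\mathbb{Z}/p^a\mathbb{Z})^\times$ via $gzg^{-1}=z^{r_g}$. Since $|P|$ and $|G/P|$ are coprime, $m:=o(\bar g)$ is coprime to $p$, so each $x\in gP$ satisfies $o(x)=m\cdot o(x^m)$, and a direct expansion gives $(gz^i)^m=g^m z^{is}$ with $s=1+r_g+r_g^2+\cdots+r_g^{m-1}$. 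The lemma therefore reduces to the coset-wise inequality
\[
\sum_{i=0}^{p^a-1} o\bigl(g^m z^{is}\bigr)\;\le\;\psi(P),
\]
since multiplying by $m$ and summing over a transversal yields $\psi(G)\le\psi(P)\psi(G/P)$.

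The observation that streamlines the analysis is that the image of $G$ in $\mathrm{Aut}(P)\cong(\mathbb{Z}/p^a\mathbb{Z})^\times$ has order coprime to $p$, since $P\le C_G(P)$ makes $G/C_G(P)$ a quotient of the $p'$-group $G/P$. Consequently the order of $r_g$ divides $p-1$, so $r_g\not\equiv 1\pmod{p^a}$ already forces $r_g\not\equiv 1\pmod p$, i.e.\ $v_p(r_g-1)=0$. The argument then splits cleanly into two cases.

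If $r_g=1$, then $s=m$ is coprime to $p$, so $i\mapsto is$ is a bijection of $\mathbb{Z}/p^a\mathbb{Z}$, and the coset-wise inequality becomes the equality $\sum_i o(g^m z^{is})=\sum_{w\in P} o(w)=\psi(P)$. If $r_g\ne 1$, then $v_p(r_g-1)=0$; combining this with the identity $k(r_g-1)\equiv 0\pmod{p^a}$ (which comes from $g^m=z^k$ being centralized by $g$) forces $g^m=1$, and combining it with $s(r_g-1)=r_g^m-1\equiv 0\pmod{p^a}$ (since $r_g$ has order dividing $m$) forces $s\equiv 0\pmod{p^a}$; hence $(gz^i)^m=1$ for every $i$, the coset sum equals $p^a$, and this is strictly less than $\psi(P)=(p^{2a+1}+1)/(p+1)$ for $a\ge 1$.

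Summing the coset bounds gives the inequality, and equality holds exactly when $r_g=1$ for every $g$, i.e.\ $P\le Z(G)$. Conversely, if $P$ is not central then $G/C_G(P)$ is a non-trivial subgroup of a cyclic group of order $p-1$, so some $g$ has $r_g\ne 1$, producing a coset with strict inequality. The main obstacle I anticipate is the $p$-adic bookkeeping in the non-trivial action case: without the observation that the image of $G$ in $\mathrm{Aut}(P)$ is a $p'$-group, the valuations $v_p(s)$ and $v_p(k)$ could a priori be mismatched and the coset sum could exceed $\psi(P)$, so verifying that this $p'$-constraint rules out the bad configurations is where most of the care must go.
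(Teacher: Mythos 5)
Your proof is correct, but note first that the paper you are being compared against does not actually prove this lemma: it is quoted verbatim as Corollary B of Amiri--Jafarian Amiri--Isaacs \cite{amiri2009sums}, so the only comparison available is with that source. The argument there is also coset-by-coset, but it runs through the commuting $p$-part/$p'$-part decomposition: since $P$ is the unique Sylow $p$-subgroup, every $x\in G$ satisfies $o(x)=o(x_p)\,o(x_{p'})$ with $x_p\in P$ and $o(x_{p'})=o(\bar x)$, and one then bounds $\sum_{x\in gP}o(x_p)$ by grouping elements of the coset according to their $p'$-part $y$ and summing $\psi(C_P(y))$ over the finitely many $p'$-elements $y$ in the coset. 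Your route replaces that decomposition with the explicit power formula $(gz^i)^m=g^mz^{is}$ and the structural fact that the conjugation action of $G$ on $P$ factors through a $p'$-subgroup of $\mathrm{Aut}(C_{p^a})$, which forces $v_p(r_g-1)=0$ whenever $r_g\ne 1$; this cleanly splits the cosets into ``centralizing'' ones (contributing exactly $m\psi(P)$) and ``non-centralizing'' ones (where every element has order exactly $m$, contributing $mp^a<m\psi(P)$), and it makes the equality case $P\le Z(G)$ completely transparent --- including the automatic centrality for $p=2$, where $\mathrm{Aut}(C_{2^a})$ is a $2$-group. All the key verifications in your write-up are sound: $o(x)=m\cdot o(x^m)$ because $m=o(\bar x)$ divides $o(x)$; $k(r_g-1)\equiv 0$ and $s(r_g-1)\equiv 0 \pmod{p^a}$ kill $g^m$ and $z^{is}$ when $r_g\ne1$; and $p^a<\psi(C_{p^a})$ for $a\ge1$. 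The only blemish is a convention slip: with $gzg^{-1}=z^{r_g}$ the expansion of $(gz^i)^m$ produces the exponent sum $1+r_g^{-1}+\cdots+r_g^{-(m-1)}$ rather than $1+r_g+\cdots+r_g^{m-1}$; since $r_g^{-1}$ is a unit of the same order and $v_p(r_g^{-1}-1)=v_p(r_g-1)$, every step goes through verbatim, so this is cosmetic rather than a gap.
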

\begin{lemma}
	\cite[Proposition 2.6]{herzog2018two} \label{prop}
	Let $H$ be a normal subgroup of the finite group $G$. Then
	$\psi(G) \leq \psi(G/H)|H|^2$.
\end{lemma}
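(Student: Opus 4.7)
The plan is to partition $G$ into the cosets of the normal subgroup $H$ and to bound the sum of element orders on each coset by a quantity involving only the order of that coset in $G/H$ and $|H|$.

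First I would write
\[
\psi(G) \;=\; \sum_{gH \in G/H} \sum_{x \in gH} o(x),
\]
so that the problem reduces to controlling $\sum_{x \in gH} o(x)$ for a fixed coset $gH$. Let $m = o(gH)$ be the order of the coset $gH$ viewed as an element of the quotient $G/H$. The key observation is that for every $x \in gH$ one has $xH = gH$ in $G/H$, hence $x^m \in H$, and therefore by Lagrange's theorem applied inside $H$ one gets $x^{m|H|} = 1$. This forces $o(x) \mid m|H|$, and in particular the crude bound $o(x) \leq m|H|$ holds for each of the $|H|$ elements of the coset.

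Summing this pointwise bound over the coset gives
\[
\sum_{x \in gH} o(x) \;\leq\; |H| \cdot m|H| \;=\; o(gH)\,|H|^2.
\]
Substituting into the initial decomposition and factoring out $|H|^2$ then yields
\[
\psi(G) \;\leq\; |H|^2 \sum_{gH \in G/H} o(gH) \;=\; \psi(G/H)\,|H|^2,
\]
which is the desired inequality.

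There is really no hard step here: the only subtlety is verifying the divisibility claim $o(x) \mid m|H|$ for $x$ in a coset of order $m$, which I would state explicitly so that the crude pointwise bound $o(x) \leq m|H|$ is unambiguous. Everything else is just summing and factoring. I would also briefly note in passing when equality can or cannot hold (for example, this bound is typically far from tight, since many elements of a coset will have order strictly dividing $m|H|$), but this is not needed for the statement itself.
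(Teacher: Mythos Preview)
Your argument is correct: the coset decomposition together with the divisibility $o(x)\mid o(gH)\,|H|$ for every $x\in gH$ gives exactly the bound $\psi(G)\leq \psi(G/H)\,|H|^2$. Note that the paper does not actually prove this lemma; it is quoted without proof as Proposition~2.6 of \cite{herzog2018two}, and the argument you give is the standard one found there.
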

\begin{lemma}\cite[Lemma 2.1]{Amiri2011zbMATH05906990} \label{sumdirect}
	If $G$ and $H$ are finite groups, then $\psi(G \times H) \leq \psi(G)\psi(H)$. Also,
	$\psi(G \times H) = \psi(G)\psi(H)$ if and only if $\gcd(|G|, |H|) = 1$.
\end{lemma}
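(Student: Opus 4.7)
The plan is to reduce the statement to a termwise comparison in the double sum defining $\psi(G \times H)$, using the basic fact that the order of an element $(g,h) \in G \times H$ equals $\mathrm{lcm}(o(g), o(h))$.

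First I would write
\[
\psi(G \times H) = \sum_{g \in G}\sum_{h \in H} o((g,h)) = \sum_{g \in G}\sum_{h \in H} \mathrm{lcm}(o(g), o(h))
\]
and invoke the elementary number-theoretic bound $\mathrm{lcm}(a,b) \leq ab$, with equality exactly when $\gcd(a,b)=1$. Summing termwise and factoring the resulting double sum gives
\[
\psi(G \times H) \leq \left(\sum_{g\in G} o(g)\right)\left(\sum_{h\in H} o(h)\right) = \psi(G)\,\psi(H),
\]
which is the desired inequality.

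For the equality characterization I would argue both directions. If $\gcd(|G|,|H|)=1$, then for every $g\in G$ and $h\in H$ Lagrange's theorem forces $o(g)\mid|G|$ and $o(h)\mid|H|$, so $\gcd(o(g),o(h))=1$ and the termwise bound is an equality throughout, giving $\psi(G\times H)=\psi(G)\psi(H)$. Conversely, if some prime $p$ divides both $|G|$ and $|H|$, Cauchy's theorem produces $g\in G$ and $h\in H$ of order $p$; for this particular pair, $\mathrm{lcm}(p,p)=p < p^2 = o(g)o(h)$, so the termwise inequality is strict at at least one pair while remaining nonstrict at all others, forcing $\psi(G\times H)<\psi(G)\psi(H)$.

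The argument is essentially mechanical once one accepts the order-of-an-element formula in a direct product; the only mildly delicate step is the strict-inequality direction, where I must exhibit a single pair on which the $\mathrm{lcm}\le\text{product}$ bound is strict, and here the application of Cauchy's theorem to a common prime divisor of $|G|$ and $|H|$ is the natural and, I expect, the only obstacle worth mentioning.
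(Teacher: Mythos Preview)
Your argument is correct. The paper does not supply its own proof of this lemma; it simply quotes the statement from \cite{Amiri2011zbMATH05906990}, so there is nothing to compare against within the paper itself. That said, the approach you give---expanding $\psi(G\times H)$ as a double sum, using $o((g,h))=\mathrm{lcm}(o(g),o(h))\le o(g)o(h)$ termwise, and then invoking Cauchy's theorem to produce a strict inequality when a common prime divides both orders---is exactly the standard proof and is essentially what appears in the cited source.
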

\begin{lemma}\cite[Proposition 2.5]{herzog2017exact} \label{2p}
Let $G$ be a finite group and suppose that there exists $x\in G$ such that $|G:\langle x\rangle|<2p$,
where $p$ is the maximal prime divisor of $|G|$. Then one of the following holds:

(i) $G$ has a normal cyclic Sylow $p$-subgroup,

(ii) $G$ is solvable and $\langle x\rangle$
is a maximal subgroup of $G$ of index either $p$ or $p+1$.
\end{lemma}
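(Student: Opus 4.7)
The plan is to analyze the permutation action of $G$ on the coset space $G/H$ with $H=\langle x\rangle$, and to exploit the smallness of the degree $n:=|G:H|<2p$ relative to the largest prime $p$ in order to control the Sylow $p$-structure. Write $|G|_p=p^a$ and $|H|_p=p^b$; since the $p$-part $p^{a-b}$ of $|G:H|$ satisfies $p^{a-b}\le|G:H|<2p$, we must have $a-b\le 1$, so the $p$-part of $|G:H|$ is either $1$ or $p$, and in the latter case $|G:H|=p$ exactly.

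Next, introduce the core $K=\mathrm{Core}_G(H)=\bigcap_{g\in G} gHg^{-1}$. It is normal in $G$ and cyclic (as a subgroup of the cyclic $H$), and $G/K$ embeds as a transitive subgroup of $\mathrm{Sym}(G/H)=S_n$. Since $\{1,\dots,n\}$ with $n<2p$ contains at most one multiple of $p$, the $p$-part of $n!$ is at most $p$, so $|G/K|_p\le p$. Let $P\in\mathrm{Syl}_p(G)$ and put $Q=P\cap K$, which is a Sylow $p$-subgroup of the normal cyclic $K$; hence $Q$ is characteristic in $K$ and therefore normal in $G$. If $Q=P$ then $P$ is cyclic and normal, giving conclusion (i). Otherwise $|P:Q|=p$, and replacing $G$ by $G/Q$ (which preserves all the hypotheses, with cyclic point stabilizer $\bar H$, the same index $n$, and the same largest prime $p$) reduces us to the case $|P|=p$ with $\mathrm{Core}_G(H)$ a $p'$-group, so $G$ acts faithfully modulo a $p'$-kernel on $n$ points.

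A case analysis on $n$ now concludes. If $n=p$ then $H$ is maximal of prime index and $G$ is essentially a transitive subgroup of $S_p$; Galois's classical classification of transitive groups of prime degree shows that either $G$ is solvable and lies in $\mathrm{AGL}_1(p)=C_p\rtimes C_{p-1}$, or $G$ contains $A_p$. The latter forces the point stabilizer to contain $A_{p-1}$, which is non-cyclic for $p\ge 5$ (the cases $p=2,3$ are handled by direct inspection), contradicting cyclicity of $H$. Hence $G$ is solvable and case (ii) holds with index $p$. If $n=p+1$, the natural examples come from $\mathrm{PGL}_2(p)$ acting on the projective line; cyclicity of the point stabilizer rules out a non-abelian composition factor isomorphic to $\mathrm{PSL}_2(p)$, and one arrives at case (ii) with index $p+1$. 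For $p+2\le n\le 2p-1$, any element of order $p$ in the image inside $S_n$ must be a single $p$-cycle, since two disjoint $p$-cycles would require degree at least $2p$. Jordan's theorem --- a primitive subgroup of $S_n$ containing a $p$-cycle with $n\ge p+3$ contains $A_n$ --- together with an analysis of imprimitive block structures (the block size must divide $n$ with all prime factors at most $p$, and the setwise stabilizer of a block must contain $H$) forces $P$ to be cyclic and normal, so case (i) holds throughout this range.

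The principal obstacle is the intermediate range $p+2\le n\le 2p-1$. Ruling out non-solvable primitive actions requires the strong form of Jordan's theorem, while the imprimitive case needs delicate arithmetic on block sizes combined with the constraint that $H$ be cyclic. No genuinely new group arises in this range, but verifying this rigorously is the delicate heart of the argument, drawing on classical permutation-group theorems of Jordan and Manning.
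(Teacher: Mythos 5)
Your proposal does not match what the paper does here: the paper does not prove this statement at all, it quotes it from Herzog--Longobardi--Maj \cite[Proposition 2.5]{herzog2017exact}, and the known proof is short and elementary, resting only on Sylow counting together with Lucchini's theorem and Herstein's theorem (both of which this paper already records as Lemma \ref{lucchini} and Lemma \ref{herstein}). Measured against that, your permutation-group route has concrete gaps. First, the reduction to $G/Q$ is not conservative for conclusion (i): if $G/Q$ has a normal Sylow $p$-subgroup $P/Q$ of order $p$, you recover only that $P\trianglelefteq G$, and $P$, being an extension of the cyclic $Q$ by $C_p$, need not be cyclic. For example $G=C_p\times(C_p\rtimes C_q)$ with $q<p$ and $H=C_p\times C_q$ satisfies the hypotheses with $|G:H|=p$; your reduction lands $G/Q$ in case (i), yet $G$ has no cyclic Sylow $p$-subgroup, so one must instead establish (ii) for $G$ --- a step your argument never supplies. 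Second, the dichotomy you attribute to Galois for degree $p$ is false as stated: Burnside's theorem gives ``solvable or $2$-transitive,'' not ``solvable or contains $A_p$'' ($\mathrm{PSL}_2(11)$, $M_{11}$, $M_{23}$ are non-solvable, transitive of prime degree, and contain no alternating group of that degree), so ruling these out needs a further argument. Third, the cases $n=p+1$ and $p+2\le n\le 2p-1$ are only gestured at (``one arrives at case (ii)'', ``forces $P$ to be cyclic and normal''), and you yourself concede that the ``delicate heart'' is unverified; but these are precisely the cases the lemma is about.

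All of this machinery is avoidable. If $p\nmid |G:\langle x\rangle|$, then the (cyclic) Sylow $p$-subgroup $P$ of $\langle x\rangle$ is a Sylow $p$-subgroup of $G$ and $\langle x\rangle\le N_G(P)$, so $n_p$ divides $|G:\langle x\rangle|<2p$ and $n_p\equiv 1\pmod p$, forcing $n_p\in\{1,p+1\}$: if $n_p=1$ you are in case (i), while if $n_p=p+1$ then $|G:\langle x\rangle|=p+1$, $\langle x\rangle=N_G(P)$ is self-normalizing and (by the same congruence applied to any intermediate subgroup) maximal, so Herstein's theorem yields case (ii). If $p\mid |G:\langle x\rangle|$, then $|G:\langle x\rangle|=p$, so $\langle x\rangle$ is maximal, and Lucchini's theorem gives $|\langle x\rangle:K|<p$ for $K={\rm core}_G(\langle x\rangle)$; hence $|G/K|=p\,|\langle x\rangle:K|$ has a unique (normal) Sylow $p$-subgroup by counting, $G/K$ is metacyclic and $K$ is cyclic, so $G$ is solvable and case (ii) holds. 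You should replace the Jordan--Manning analysis with this argument, or simply cite the source as the paper does.
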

\begin{lemma}\cite[Theorem 1]{herzog2018two} \label{solpower}
	Let $G$ be a finite group of order $n$ containing a subgroup $A$ of prime power
	index $p^s$. Suppose that $A$ contains a normal cyclic subgroup $B$ satisfying the following
	condition: $A/B$ is a cyclic group of order $2^r$ for some non-negative integer $r$. Then $G$ is
	a solvable group.
\end{lemma}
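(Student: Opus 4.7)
The plan is to induct on $|G|$, taking a counterexample of minimal order, showing the hypothesis descends to every proper normal subgroup and every proper quotient (so $G$ must be simple), and then invoking Guralnick's classification of non-abelian finite simple groups with a subgroup of prime-power index. Two structural consequences of the hypothesis drive the case analysis: first, $A$ is metabelian with $A'\le B$ cyclic, and second, the odd part of $|A|$ lies inside the cyclic group $B$, so every Sylow $q$-subgroup of $A$ for $q$ odd is cyclic.

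For the inheritance step, let $N\trianglelefteq G$ with $1<N<G$. In $G/N$ take the pair $(AN/N,\,BN/N)$: the index $[G/N:AN/N]=[G:AN]$ is a divisor of $p^s$; the subgroup $BN/N$ is cyclic as a quotient of $B$ and is normal in $AN/N$ because $B\trianglelefteq A$ together with $N\trianglelefteq G$ forces $BN\trianglelefteq AN$; and $(AN/N)/(BN/N)\cong AN/BN$ is a quotient of $A/B$, hence cyclic of $2$-power order. In $N$ take the pair $(N\cap A,\,N\cap B)$: the index $[N:N\cap A]=[NA:A]$ divides $p^s$; the subgroup $N\cap B$ is cyclic and normal in $N\cap A$; and by the second isomorphism theorem $(N\cap A)/(N\cap B)$ embeds into $A/B$, so is cyclic of $2$-power order. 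By minimality both $N$ and $G/N$ are solvable, making $G$ solvable and contradicting the choice of $G$. Hence $G$ has no proper nontrivial normal subgroup and, being non-solvable, is a non-abelian finite simple group.

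Now I apply Guralnick's theorem: a non-abelian simple group with a subgroup of prime-power index must be one of (a) $A_n$ with $n=p^s$ prime and subgroup $A_{n-1}$; (b) $\mathrm{PSL}_d(q)$ with a maximal parabolic of index $(q^d-1)/(q-1)$; (c) $\mathrm{PSL}_2(11)$ with subgroup $A_5$; (d) $M_{11}$ with $M_{10}$ or $M_{23}$ with $M_{22}$; or (e) $\mathrm{PSU}_4(2)$ with a parabolic of index $27$. In cases (a), (c), (d), the subgroup $A$ either has a non-abelian simple composition factor (for instance $A_{n-1}$ with $n-1\ge 5$, or $A_5$, $M_{10}$, $M_{22}$) or equals $A_4$, whose only normal subgroups are $1$ and the non-cyclic $V_4$; in every sub-case the required metabelian structure with cyclic derived subgroup and cyclic $2$-power abelianization fails. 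Case (e) is eliminated by direct inspection of the order-$960$ parabolic, whose abelianization is not a cyclic $2$-group. The main obstacle is case (b): the Levi decomposition $A=L\ltimes U$ together with the cyclic-odd-Sylow condition on $A$ severely restricts $(d,q)$, and the Diophantine constraint $(q^d-1)/(q-1)=p^s$ (a Zsigmondy-type equation), combined with the cyclic $2$-power quotient requirement on $A/B$, eliminates the survivors -- either by returning a solvable $G$ (such as $\mathrm{PSL}_2(3)\cong A_4$) or by contradicting the structure of the unipotent radical $U$ (which fails to be cyclic whenever $q$ is not prime, and whose normalizer fails the $2$-power quotient condition when $q$ is a Fermat prime).
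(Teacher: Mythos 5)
First, a point of reference: the paper does not prove this lemma at all; it is quoted verbatim from Herzog--Longobardi--Maj \cite[Theorem 1]{herzog2018two}, so your attempt has to stand on its own. The skeleton you chose is the natural one (and is essentially how such results are proved): your reduction to a minimal counterexample is sound and completely carried out --- the pairs $(N\cap A,\, N\cap B)$ and $(AN/N,\, BN/N)$ do inherit the hypothesis, exactly as you argue, so a minimal counterexample is non-abelian simple and Guralnick's classification applies. Cases (a), (c), (d) are also disposed of correctly, since the hypothesis forces $A$ to be metacyclic (hence solvable), while in those cases $A$ is either non-solvable or equal to $A_4$.

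The genuine gap is case (b), $G=\mathrm{PSL}_d(q)$ with $A$ a parabolic of index $(q^d-1)/(q-1)=p^s$, which you yourself call the main obstacle but then only gesture at, and the specific claims you make there are wrong. The unipotent radical $U\cong E_{q^{d-1}}$ is non-cyclic for every $d\ge 3$ even when $q$ is prime, so ``fails to be cyclic whenever $q$ is not prime'' is not the operative fact; the correct route is that for odd $q$ your cyclic-odd-Sylow observation kills all $d\ge 3$, while for even $q$ the metacyclic Sylow $2$-subgroup condition leaves only $\mathrm{PSL}_3(2)$ and $\mathrm{PSL}_2(4)$, whose parabolics $S_4$ and $A_4$ visibly fail the hypothesis. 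More seriously, your Fermat-prime claim is backwards: when $q$ is a Fermat prime, the Borel subgroup $A=E_q:C_{(q-1)/2}$ of $\mathrm{PSL}_2(q)$ \emph{does} satisfy the hypothesis (take $B=E_q$, cyclic of prime order, with $A/B$ a cyclic $2$-group); what eliminates this case is not the structure of $A$ but the Diophantine condition, since the index $q+1=2(2^t+1)$ with $t\ge 1$ is not a prime power. Likewise ``returning a solvable $G$ such as $\mathrm{PSL}_2(3)$'' cannot occur, because Guralnick's list consists only of non-abelian simple groups. Finally, your disposal of case (e) is also incorrect as stated: the index-$27$ parabolic $2^4{:}A_5$ of $\mathrm{PSU}_4(2)$ is perfect, so its abelianization is trivial, which \emph{is} a cyclic $2$-group of order $2^0$; the correct objection is simply that $2^4{:}A_5$ is non-solvable, whereas the hypothesis forces $A$ to be metacyclic and hence solvable. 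So the architecture of your proof is right, but the decisive case analysis is not actually proved and, where it is sketched, rests on false assertions.
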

\begin{lemma} \cite[Theorem]{herstein} \label{herstein}
	Let $G$ be a finite group, $A$ an Abelian subgroup of $G$. If
	$A$ is a maximal subgroup of $G$ then $G$ is solvable.
\end{lemma}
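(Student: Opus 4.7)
The plan is to argue by induction on $|G|$, splitting into the two cases $A \trianglelefteq G$ and $A \not\trianglelefteq G$. In the normal case, the maximality of $A$ implies, via the correspondence theorem, that $G/A$ has no proper nontrivial subgroup, hence $G/A$ is cyclic of prime order; then $G$ is a cyclic extension of the abelian group $A$ and is therefore solvable.

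Suppose instead that $A$ is not normal in $G$, and set $N = N_G(A)$. The chain $A \leq N \leq G$ together with the maximality of $A$ and $N \neq G$ forces $N = A$, so $A$ is self-normalizing. I would next establish the pointwise intersection bound: for every $g \in G \setminus A$, one has $A \cap A^g \leq Z(G)$. Indeed, if $x \in A \cap A^g$, then both $A$ and $A^g$ (being abelian) centralize $x$, so $\langle A, A^g\rangle \leq C_G(x)$; since $A^g \neq A$, this subgroup strictly contains $A$ and therefore equals $G$ by maximality, placing $x$ in $Z(G)$.

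Set $Z := Z(G)$. If $Z \neq 1$, either $Z \not\leq A$, in which case $AZ = G$ by maximality and $G$ is abelian, or $Z \leq A$, in which case $A/Z$ is an abelian maximal subgroup of the strictly smaller group $G/Z$; the inductive hypothesis yields $G/Z$ solvable, whence $G$ is solvable. If on the other hand $Z = 1$, the preceding bound becomes $A \cap A^g = 1$ for every $g \notin A$ --- exactly the Frobenius complement configuration. Frobenius's theorem then supplies a nilpotent normal complement $K$ with $G = K \rtimes A$, and solvability of $G$ follows at once from solvability of $K$ and of $A$.

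The main obstacle is the sub-case $Z(G) = 1$, which depends essentially on the classical theorem of Frobenius on the existence of the kernel of a Frobenius group --- a result for which no elementary proof is known. The remaining cases are straightforward bookkeeping with maximality and the inductive hypothesis; none of them requires the $\psi$-function lemmas preceding the statement, which is consistent with the fact that this result belongs to the general theory of finite groups and is imported here as a preliminary.
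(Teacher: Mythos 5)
The paper does not prove this lemma at all: it is imported by citation from Herstein's 1958 note, so the only thing to compare your argument with is that classical proof. Your overall architecture is in fact Herstein's: induction on $|G|$, the easy normal case, self-normalization of $A$ in the non-normal case, the key observation that $A\cap A^{g}\le Z(G)$ for $g\notin A$, and the emergence of a Frobenius group with abelian complement $A$ when $Z(G)=1$. All of those steps are correct as you wrote them (modulo the trivial edge case $A=1$, where $|G|$ is prime and there is no Frobenius configuration to speak of).

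The one genuine gap is in the final sub-case. Frobenius's theorem supplies only the \emph{existence} of the normal complement $K$ with $G=K\rtimes A$; the \emph{nilpotency} of a Frobenius kernel is Thompson's theorem, a much deeper result that postdates Herstein's paper. Your sentence ``Frobenius's theorem then supplies a nilpotent normal complement'' conflates the two, and the conflation matters: $G/K\cong A$ abelian does not give solvability of $G$ unless you also know $K$ is solvable, which does not follow from Frobenius's theorem alone. Two repairs are available. (i) Explicitly invoke Thompson's nilpotency theorem; the proof is then complete but rests on very heavy machinery. (ii) Follow Herstein and stay elementary: take $Q\in\mathrm{Syl}_q(K)$ for a prime $q\mid |K|$; Frattini gives $G=KN_G(Q)$. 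Since $\gcd(|K|,|A|)=1$ and $G/K\cong A$ is solvable, Schur--Zassenhaus produces inside $N_G(Q)$ a complement to $K$, necessarily a conjugate $A^{x}$ of $A$. If $N_G(Q)<G$, maximality forces $N_G(Q)=A^{x}$, whence $1\ne Q\le K\cap A^{x}=1$, a contradiction. So $Q\trianglelefteq G$; then $QA=G$ by maximality of $A$, which forces $K=Q$. Thus $K$ is a $q$-group and $G$ is solvable with no appeal to Thompson.
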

\begin{lemma}\cite[Lemma 9.1]{isaa} \label{isaa}
	Let $G$ be a group, and suppose that $G / Z ( G )$ is simple. Then
	$G / Z(G)$ is non-Abelian, and $G^{\prime}$ is perfect. Also, $G^{\prime}/ Z (G^{\prime})$ is isomorphic to
	the simple group $G / Z(G)$.
\end{lemma}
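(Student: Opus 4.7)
The plan is to set $Z = Z(G)$ and derive the three conclusions in the order (1) $G/Z$ non-Abelian, (3) $G'/Z(G') \cong G/Z$, and (2) $G'$ perfect, relying only on the simplicity of $G/Z$ together with the classical fact that $G/Z(G)$ cyclic forces $G$ Abelian. The one recurring tool is the following observation: whenever $N \unlhd G$ satisfies $NZ = G$, the second isomorphism theorem gives $G/N = NZ/N \cong Z/(Z \cap N)$, so $G/N$ is Abelian and hence $G' \subseteq N$.

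For (1), I would argue by contradiction: if $G/Z$ is Abelian and simple, it is cyclic of prime order, so $G/Z(G)$ is cyclic, forcing $G$ to be Abelian; but then $G = Z$, making $G/Z$ trivial, not simple. For (3), since $G/Z$ is now non-Abelian simple it is perfect, and the identity $(G/Z)' = G'Z/Z$ yields $G'Z = G$. The second isomorphism theorem then gives $G'/(G' \cap Z) \cong G/Z$. To identify the kernel as $Z(G')$, the containment $G' \cap Z \subseteq Z(G')$ is immediate because $Z = Z(G)$ centralizes all of $G$, hence all of $G'$. For the reverse, $Z(G')/(G' \cap Z)$ is a normal subgroup of the simple group $G'/(G' \cap Z)$; it cannot be the whole group, for otherwise $G'$ would be Abelian, making $G/Z \cong G'/(G' \cap Z)$ Abelian and contradicting (1). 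Hence $Z(G') = G' \cap Z$, and $G'/Z(G') \cong G/Z$.

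For (2), I apply the same machinery to $G'' \unlhd G$ (normal because $G''$ is characteristic in the normal subgroup $G'$). The image $G''Z/Z$ is normal in the simple $G/Z$, so it is either trivial or all of $G/Z$. If trivial, then $G'' \subseteq G' \cap Z = Z(G')$, and the Abelian group $G'/G''$ would surject onto the non-Abelian quotient $G'/Z(G') \cong G/Z$, which is absurd. Therefore $G''Z = G$, and the recurring tool from the first paragraph yields $G' \subseteq G''$; combined with $G'' \subseteq G'$, this gives $G' = G''$, i.e., $G'$ is perfect. The proof is essentially bookkeeping once the principle \emph{$NZ = G$ implies $G' \subseteq N$} is isolated, and I foresee no serious obstacles — the main conceptual step is recognizing that the same dichotomy (trivial vs.\ whole group) imposed by simplicity of $G/Z$ can be applied successively to $G'$ and $G''$.
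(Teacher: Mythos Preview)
The paper does not supply its own proof of this lemma; it is quoted verbatim from Isaacs' \emph{Finite Group Theory} (Lemma~9.1) and only the citation is given. Your argument is correct and is essentially the standard textbook proof: the simplicity of $G/Z$ forces the dichotomy ``trivial or everything'' on the images of $G'$, $Z(G')$, and $G''$ in turn, and each alternative is ruled out by the classical fact that $G/Z(G)$ cyclic implies $G$ Abelian. There is nothing further in the paper to compare against.
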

\begin{lemma}\cite[Theorem 2.20]{isaa} (Lucchini) \label{lucchini}
	Let $A$ be a cyclic proper subgroup of a finite
	group $G$, and let $K={\rm core}_G(A)$. Then $| A : K | < | G : A |$, and in particular,
	if $|A| > |G : A|$ , then $K>1$.
\end{lemma}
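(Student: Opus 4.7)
My plan is to reduce to the case where $A$ has trivial core, analyze the coset action of $A$ on $G/A$ exploiting that $A$ is abelian, and then conclude by induction on $|G|$.

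First I would reduce to $K = 1$. The core $K$ is precisely the kernel of the left coset action $G \to \mathrm{Sym}(G/A)$, so passing to $\bar G = G/K$ gives a faithful action on the same set, with $\bar A = A/K$ cyclic, trivial core in $\bar G$, and $[\bar G:\bar A] = [G:A]$. Hence it suffices to prove that if $A$ is a cyclic proper subgroup of $G$ with ${\rm core}_G(A) = 1$, then $|A| < [G:A]$.

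Second, I would study the $A$-orbits on $\Omega = G/A$. The coset $A$ is $A$-fixed; the stabilizer of $gA$ in $A$ is $A \cap gAg^{-1}$, and because $A$ is abelian, a short computation (replacing $g$ by $hg$ with $h \in A$) shows this stabilizer is constant on each $A$-orbit. Setting $m = |A|$ and $n = [G:A]$, let $O_1, \ldots, O_r$ be the $A$-orbits of size $> 1$, with stabilizer orders $d_1, \ldots, d_r$ (proper divisors of $m$, with $|O_i| = m/d_i$), and let $f$ denote the number of additional $A$-fixed points (so $f = [N_G(A):A] - 1$, since $gA$ is $A$-fixed iff $g \in N_G(A)$). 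Then
\[ n \;=\; 1 + f + \sum_{i=1}^{r} \frac{m}{d_i}. \]
The condition ${\rm core}_G(A) = 1$ means the intersection of all orbit-stabilizers is trivial; in the cyclic group $A$ this intersection has order $\gcd(d_1,\ldots,d_r)$, so $\gcd(d_1,\ldots,d_r) = 1$. Equivalently, a generator of $A$ acts on $\Omega$ as an element of order $m$ with $1+f$ fixed points and $r$ cycles of lengths $m/d_i$.

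The main obstacle is to deduce $m \le n - 1$ (equivalently, $\sum_i m/d_i \ge m - f$) from $\gcd(d_1,\ldots,d_r) = 1$ alone, since a purely combinatorial bound does not suffice: for example the datum $\{d_1,d_2\} = \{3,4\}$, which is consistent with an element of order $12$ having a $3$-cycle and a $4$-cycle on $\Omega$, would combinatorially violate $m < n$. The remaining pathological configurations must therefore be excluded by further group-theoretic input. I would close this gap by strong induction on $|G|$: pick a minimal normal subgroup $N$ of $G$; since $N \le {\rm core}_G(A) = 1$ is impossible, we have $N \not\le A$, and the image $AN/N$ is a cyclic proper subgroup of $G/N$ (provided $AN \ne G$), to which the inductive hypothesis applies. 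Combining the resulting index estimate in $G/N$ with the observation that $A \cap N$ is the unique cyclic subgroup of $A$ of its order, and treating the edge cases $AN = G$ (where $G = N \rtimes A$) and $N$ non-abelian separately, should yield the desired inequality. The ``in particular'' clause is then immediate: if $|A| > [G:A]$, then $K = 1$ would give $|A:K| = |A| > [G:A]$, contradicting the established inequality, and so $K > 1$.
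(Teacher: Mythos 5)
The paper does not actually prove this lemma: it is Lucchini's theorem, quoted verbatim from Isaacs, \emph{Finite Group Theory}, Theorem 2.20, so there is no in-paper argument to compare against. Judged on its own, your proposal is an honest and partly correct setup, but it is not a proof. The reduction to ${\rm core}_G(A)=1$, the orbit analysis of $A$ on $G/A$ (constancy of stabilizers along orbits because $A$ is abelian, the count $n=1+f+\sum_i m/d_i$, and the identification of triviality of the core with $\gcd(d_1,\dots,d_r)=1$) are all correct, and your observation that the combinatorial data alone cannot force $m<n$ (e.g.\ $\{d_1,d_2\}=\{3,4\}$ with $m=12$) is exactly right. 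But that observation means the entire content of the theorem still lies ahead at the point where your argument stops.

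The gap is the inductive step, which is only gestured at (``should yield the desired inequality''). Concretely: (i) if $AN<G$, the inductive hypothesis applied to $AN/N\le G/N$ gives $|AN/N : L/N|<|G:AN|$ where $L/N={\rm core}_{G/N}(AN/N)$, and you have no control over $L$, nor any way to convert this into a bound on $|A|$ versus $|G:A|=|G:AN|\cdot|N:A\cap N|$; the fact that $A\cap N$ is the unique subgroup of $A$ of its order makes it characteristic in $A$, hence normal in $N_G(A)$, but not normal in $G$, so it does not feed back into the core hypothesis. (ii) The case $AN=G$ with $N$ an abelian minimal normal subgroup forces $A\cap N\trianglelefteq G$, hence $A\cap N=1$ and $G=N\rtimes A$ with $A$ acting faithfully (since $C_A(N)\trianglelefteq G$); but concluding $|A|<|N|$ here is precisely Horosevskii's theorem on orders of automorphisms, a nontrivial result you neither invoke nor reprove. (iii) The case $AN=G$ with $N$ non-abelian is not a semidirect product in general and needs yet another argument (Isaacs routes this through Zenkov's theorem on intersections of abelian subgroups). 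Until these three points are supplied, the proposal establishes only the easy reductions, not the theorem.
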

\begin{lemma}\cite[Theorem 3.1]{hallsylow} \label{sylow}
	 If $n = 1 + rp$, with $1 < r < (p + 3)/2$ there is not a
	group $G$ with $n$ Sylow $p$-subgroups unless $n = q^t$ where $q$ is a prime, or $r = (p - 3)/2$ and
	$p > 3$ is a Fermat prime.
\end{lemma}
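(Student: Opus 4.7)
The plan is to replace $G$ by its image in the faithful transitive permutation representation of degree $n$ on $\Omega = {\rm Syl}_p(G)$, and then to exploit the constraint $1 < r < (p+3)/2$ to pin down the orbit structure of a Sylow $p$-subgroup on $\Omega$.

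First, I would reduce to the faithful case: let $K$ be the kernel of the conjugation action $G \to {\rm Sym}(\Omega)$. Since $K$ is contained in every Sylow normalizer, the quotient $G/K$ has the same Sylow $p$-count as $G$, so we may replace $G$ by $G/K$. Fix $P \in \Omega$ and set $H = N_G(P)$, so that $|G:H| = n$ and $G$ is a transitive faithful permutation group of degree $n$.

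Next I would analyse the $P$-orbits on $\Omega$. The point $P$ is the unique $P$-fixed point, since any Sylow $p$-subgroup $Q$ of $G$ normalized by $P$ satisfies $P \leq N_G(Q)$, and $Q$ is the unique Sylow $p$-subgroup of its own normalizer, forcing $P = Q$. All other $P$-orbits therefore have $p$-power length at least $p$. For $p \geq 3$ the hypothesis $r < (p+3)/2$ yields $r < p$, so $rp < p^2$ and no nontrivial $P$-orbit can have length $p^2$ or larger; consequently $P$ cuts $\Omega \setminus \{P\}$ into exactly $r$ orbits, each of length $p$.

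Finally I would bring in the hypothesis that $n$ is not a prime power. Pick a prime $\ell \neq p$ dividing $n$ and a Sylow $\ell$-subgroup $L$; its orbits on $\Omega$ are constrained both by $|L|$ and, because the $H$-orbits on $\Omega \setminus \{P\}$ are unions of the $r$ known $P$-orbits of length $p$, by $p$-divisibility. Careful bookkeeping, combining Sylow's congruence applied inside two-point stabilizers with the Frattini argument on a normal $p$-subgroup of $H$, should funnel $G$ into one of two possibilities: either an imprimitive structure with a block system of prime-power size, giving $n = q^t$; or an almost $2$-transitive action that coincides with the natural degree-$n$ action of a one-dimensional affine or projective group over ${\mathbb F}_p$. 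Arithmetic compatibility of the latter configuration with $n = 1+rp$ then pins $r = (p-3)/2$ and forces $p-1$ to be a power of $2$, i.e.\ $p$ a Fermat prime.

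The main obstacle will be the final classification step: ruling out every transitive degree-$n$ permutation group whose Sylow $p$-subgroups exhibit the prescribed orbit pattern, except for the prime-power and Fermat-prime configurations. This is a delicate pre-CFSG combinatorial argument, handled in Hall's cited paper via repeated Sylow counting inside point and two-point stabilizers, together with transfer-theoretic analysis of the normalizer $H$.
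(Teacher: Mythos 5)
This lemma is not proved in the paper at all: it is imported verbatim from M.~Hall's 1967 paper (\cite[Theorem 3.1]{hallsylow}) and used as a black box, so there is no internal argument to compare yours against. Judged on its own terms, your proposal gets the correct and standard opening moves: passing to the faithful transitive action on ${\rm Syl}_p(G)$, the observation that $P$ is the unique fixed point of $P$ (via $P\le N_G(Q)$ and the uniqueness of the Sylow $p$-subgroup of $N_G(Q)$), and the deduction that for $p\ge 3$ the bound $r<(p+3)/2\le p$ forces every nontrivial $P$-orbit to have length exactly $p$, giving $r$ such orbits. All of that is accurate and is indeed how Hall's analysis begins.

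The genuine gap is the entire second half. The statement's content lives in the elimination step --- showing that, given this orbit pattern, either $n$ is a prime power or $r=(p-3)/2$ with $p$ a Fermat prime --- and your proposal replaces it with ``careful bookkeeping \dots should funnel $G$ into one of two possibilities,'' which you yourself flag as the main obstacle. No mechanism is given for why an imprimitive block system would have prime-power block size, nor for why the alternative is ``almost $2$-transitive.'' Moreover, the guessed endgame is off target: the exceptional configuration realizing $r=(p-3)/2$ is not an affine or projective group over $\mathbb{F}_p$ but ${\rm PSL}(2,2^k)$ with $p=2^k+1$, whose Sylow $p$-normalizers are dihedral of order $2(q+1)$ with $q=2^k$, giving $n=q(q-1)/2=(p-1)(p-2)/2=1+\tfrac{p-3}{2}\,p$; this is why $p-1$ must be a power of $2$. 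Hall's actual argument proceeds by analyzing maximal intersections $D=P\cap Q$ of distinct Sylow $p$-subgroups and Sylow counting inside $N_G(D)$ (together with transfer when the intersections are trivial), none of which is executed here. As written, the proposal is a plausible road map for the easy first third of the theorem, not a proof.
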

For a prime number $q$, by $n_q(G)$ or briefly $n_q$, we denote the number of Sylow $q$-subgroups of $G$. Also, 
the set of all Sylow $q$-subgroups of $G$ is denoted by ${\rm Syl}_q(G)$. 	If $n$ is an integer, then $\pi(n)$ is the set of all prime divisors of $n$.
If $G$ is a finite group, then  $\pi(|G|)$ is denoted by $\pi(G)$.
\begin{lemma}\cite[Theorem 3.2]{hallsylow} \label{21}
 There is no group $G$ with $n_3 = 22$, with $n_5 = 21$, or with
$n_p=1+3p$ for $p\geqslant 7$.
\end{lemma}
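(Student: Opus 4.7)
All three assertions are Sylow-count non-existence statements, so I proceed by contradiction, starting from a hypothetical $G$ with the forbidden count. The standard tools are Sylow's congruence $n_p\equiv 1\pmod p$, the divisibility $n_p\mid|G|$, and the transitive conjugation action of $G$ on its Sylow $p$-subgroups, with stabilizer $N_G(P)$ of index $n_p$.

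For the infinite family $n_p=1+3p$ with $p\geq 7$, the parameter $r=3$ lies in the range $1<r<(p+3)/2$ for all $p>3$, so Lemma~\ref{sylow} applies and leaves only two escape routes: (a) $1+3p=q^t$ is a prime power, or (b) $r=(p-3)/2$ with $p$ a Fermat prime $>3$. Route (b) would force $p=9$, which is composite; route (a) forces $q=2$ since $1+3p$ is even for odd $p$, whence $p=(2^t-1)/3$, and a short factorization argument (splitting $2^t-1$ as $(2^{t/2}-1)(2^{t/2}+1)$ for the necessary even $t$) shows this is composite for every prime $p\geq 7$. Thus Lemma~\ref{sylow} together with an elementary number-theoretic check disposes of the entire infinite family.

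The two remaining cases $n_3=22$ (with $r=7$) and $n_5=21$ (with $r=4$) sit outside the hypothesis of Lemma~\ref{sylow} and each needs a direct permutation-group analysis. I would set $N=N_G(P)$ and $K=\mathrm{core}_G(N)$, so that $G/K$ embeds transitively in $S_{n_p}$. Since $P$ fixes only itself on the set of Sylow $p$-subgroups (as any Sylow $p$-subgroup it normalises would have to coincide with the unique such subgroup of its own normaliser), the $P$-orbits on the remaining $n_p-1$ Sylow subgroups all have $p$-power length, which tightly constrains the admissible partitions: for $n_5=21$ only four orbits of length $5$ are possible, and for $n_3=22$ only the three partitions $(3^7)$, $(3^4,9)$, $(3,9^2)$ of $21$ arise. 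Cross-checking each such partition against $n_p\mid|G|$ and applying Lucchini's lemma (Lemma~\ref{lucchini}) to the normaliser $N$ would eliminate every configuration.

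The main obstacle is the sporadic pair $n_3=22$ and $n_5=21$: the infinite family collapses after invoking Lemma~\ref{sylow} to a purely number-theoretic check, but the two small values demand genuine orbit-counting inside the normaliser, in the spirit of Hall's original 1967 analysis. Once those two configurations are ruled out the lemma is complete.
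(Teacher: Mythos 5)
The paper does not prove this lemma at all: it is quoted verbatim from Hall's 1967 paper as \cite[Theorem 3.2]{hallsylow}, so any comparison is really between your attempt and Hall's original argument. Your treatment of the infinite family $n_p=1+3p$, $p\geq 7$, is correct and complete: $r=3$ satisfies $1<r<(p+3)/2$ for $p>3$; the Fermat-prime escape forces $p=9$, which is absurd; and the prime-power escape forces $1+3p=2^t$ with $t$ even, whence $3p=(2^{t/2}-1)(2^{t/2}+1)$ with both factors exceeding $3$ once $t\geq 6$, contradicting primality of $p$ (and $t=2,4$ give $p=1,5$, both excluded). This is essentially how the family is disposed of from Hall's Theorem 3.1.

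The genuine gap is in the two sporadic cases $n_3=22$ and $n_5=21$, which are precisely the nontrivial content of Hall's Theorem 3.2. Your orbit computation (only $P$ is fixed, all other orbits have $p$-power length, giving the partitions $(5^4)$ of $20$ and $(3^7),(3^4,9),(3,9^2)$ of $21$) is a correct but standard first step that produces no contradiction on its own: nothing so far forbids, say, a transitive action on $21$ points with four $P$-orbits of length $5$. The proposed ``cross-check'' does not close this: $n_p\mid |G|$ is automatic from $n_p=[G:N_G(P)]$ and gives no information, and Lemma~\ref{lucchini} (Lucchini) applies only to \emph{cyclic} proper subgroups, whereas $N_G(P)$ is not known to be cyclic, so it cannot be ``applied to the normaliser.'' What is actually needed is Hall's finer machinery, e.g.\ his strengthening of Sylow's congruence ($n_p\equiv 1\pmod{p^2}$ unless two Sylow $p$-subgroups $P\neq Q$ satisfy $[P:P\cap Q]=p$, which must occur here since $21\not\equiv 1\pmod{25}$ and $22\not\equiv 1\pmod 9$), followed by an analysis of $N_G(P\cap Q)$ and the fusion of Sylow subgroups containing $P\cap Q$. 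Without that, the two cases your main text actually relies on (via $n_5=21$ in Case 3 and $n_7=22$ in Case 5) remain unproved.
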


\begin{notation} \cite[Notation 2.1 and Notation 2.2]{herzog2018two}
	Let $ \{ q_1, q_2, q_3, \cdots \} $ be the set of \textbf{all} primes in an increasing order: 
	$2 = q_1 < q_2 < q_3 < \cdots $. Let also $q_0 = 1$.
	If $r, s$ are two  positive integers, we define the functions $f(r)$ and $h(s)$ as follows:
	\[f(0)=1, \quad f(r)=\prod_{i=1}^{r} \dfrac{q_i}{q_i+1}; \qquad h(1)=2, \quad h(s)=f(s-1)q_s.\]
\end{notation}


\section{\bf  Preliminary Results}
\begin{lemma}\label{aval}
	Let $G$ be a group of order $n={p_1}^{\alpha_1}\cdots{p_k}^{\alpha_k}$, where ${p_1}, \cdots,{p_k}$ are distinct primes. Let $\psi(G)>\dfrac{r}{s} \psi(C_n)$, for some integers $r, s$. Then  there exists a cyclic subgroup $ \langle{x}\rangle$ such that
	\[ [G:\langle{x}\rangle]< \dfrac{s}{r} \cdot \dfrac{p_1+1}{p_1}\cdots \dfrac{p_k+1}{p_k}. \]
\end{lemma}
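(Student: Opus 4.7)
The plan is to exploit the trivial upper bound $\psi(G)\leq n\cdot M$, where $M$ is the maximum element order in $G$, and compare it with a clean lower bound for $\psi(C_n)$. The cyclic subgroup we want is then just $\langle x\rangle$ for any $x$ of maximal order, and the task reduces to verifying the arithmetic inequality.

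First I would take $x\in G$ with $o(x)=M:=\max_{g\in G}o(g)$ and note that
\[
\psi(G)=\sum_{g\in G}o(g)\leq |G|\cdot M=nM,
\]
so that $M\geq \psi(G)/n$. The hypothesis $\psi(G)>\tfrac{r}{s}\psi(C_n)$ then gives $M>\tfrac{r}{s}\cdot\psi(C_n)/n$, and the desired conclusion
\[
[G:\langle x\rangle]=\frac{n}{M}<\frac{s}{r}\cdot\frac{n^{2}}{\psi(C_n)}
\]
will follow once I show
\[
\frac{n^{2}}{\psi(C_n)}\leq \prod_{i=1}^{k}\frac{p_i+1}{p_i}.
\]

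For the latter, I would use multiplicativity of $\psi$ on coprime factors (Lemma \ref{sumdirect} applied to $C_n\cong\prod_i C_{p_i^{\alpha_i}}$, which gives equality since the orders are pairwise coprime) together with the stated formula $\psi(C_{p^{\alpha}})=(p^{2\alpha+1}+1)/(p+1)$. This yields
\[
\psi(C_n)=\prod_{i=1}^{k}\frac{p_i^{\,2\alpha_i+1}+1}{p_i+1}>\prod_{i=1}^{k}\frac{p_i^{\,2\alpha_i+1}}{p_i+1}=n^{2}\prod_{i=1}^{k}\frac{p_i}{p_i+1},
\]
which is exactly the required inequality (in fact strict, which even allows a bit of slack).

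There is no real obstacle here: the argument is essentially a one–line averaging bound on $\psi(G)$ combined with the explicit factorisation of $\psi(C_n)$. The only thing to be careful about is the direction of the strict inequalities so that the final bound on $[G:\langle x\rangle]$ comes out strict, but since $p^{2\alpha+1}+1>p^{2\alpha+1}$ for every prime power factor, this is automatic.
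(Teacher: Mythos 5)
Your proposal is correct and is essentially the paper's own argument: the authors likewise lower-bound $\psi(C_n)$ by $n^2\prod_i p_i/(p_i+1)$ using multiplicativity and the formula for $\psi(C_{p^\alpha})$, then deduce by averaging that some $x$ has $o(x)>\tfrac{r}{s}\prod_i\tfrac{p_i}{p_i+1}\,n$, giving the stated index bound. No substantive difference.
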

\begin{proof}
		We have
		\begin{align*} \label{sumppp}
		\psi(G)&>\dfrac{r}{s}\psi(C_n)=\dfrac{r}{s}\psi(C_{{p_1}^{\alpha_1}\cdots{p_k}^{\alpha_k}})
		=\dfrac{r}{s}\psi(C_{{p_1}^{\alpha_1}})\cdots\psi(C_{{p_k}^{\alpha_k}}) \\
		&=\dfrac{r}{s} \cdot\dfrac{{p_1}^{2\alpha_1+1}+1}{p_1+1} \cdots \dfrac{{p_k}^{2\alpha_k+1}+1}{p_k+1}
		>\dfrac{r}{s} \cdot\dfrac{{p_1}^{2\alpha_1+1}}{p_1+1} \cdots \dfrac{{p_k}^{2\alpha_k+1}}{p_k+1}\\
		&=\dfrac{r}{s} \cdot\dfrac{p_1}{p_1+1} \cdots \dfrac{p_k}{p_k+1} n^2.
		\end{align*}
	So	it follows  that there exists $x\in G $ such that
		\[o(x)>\dfrac{r}{s} \cdot\dfrac{p_1}{p_1+1} \cdots \dfrac{p_k}{p_k+1} n.\]
		 We conclude that
		\[ [G:\langle{x}\rangle]< \dfrac{s}{r} \cdot \dfrac{p_1+1}{p_1}\cdots \dfrac{p_k+1}{p_k},\]
		and we get the result.
\end{proof}

\begin{lemma} \label{pa}
Let $p$ be a prime number and $a,b>0$. Then $\psi(C_{p^{a+b}})\geq \psi(C_{p^a})\psi(C_{p^b})$.
\end{lemma}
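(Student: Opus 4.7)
The plan is purely computational: invoke the closed-form expression $\psi(C_{p^{n}}) = (p^{2n+1}+1)/(p+1)$ recalled in the introduction and reduce the claim to an elementary polynomial inequality in $p$. Substituting this formula on both sides, the desired inequality $\psi(C_{p^{a+b}}) \geq \psi(C_{p^{a}})\psi(C_{p^{b}})$ becomes, after multiplying through by the positive quantity $(p+1)^{2}$,
\[
(p+1)\bigl(p^{2a+2b+1}+1\bigr) \;\geq\; \bigl(p^{2a+1}+1\bigr)\bigl(p^{2b+1}+1\bigr).
\]

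I would then expand both sides and cancel the common terms $p^{2a+2b+2}$ and $1$. What remains is the inequality $p^{2a+2b+1} + p \geq p^{2a+1} + p^{2b+1}$, and the crux of the argument is recognizing that the difference of the two sides factors cleanly: $p^{2a+2b+1} + p - p^{2a+1} - p^{2b+1} = p\bigl(p^{2a}-1\bigr)\bigl(p^{2b}-1\bigr)$, using the identity $p^{2a+2b} - p^{2a} - p^{2b} + 1 = (p^{2a}-1)(p^{2b}-1)$. Since $p \geq 2$ and $a,b \geq 1$, each factor is positive, so the right-hand side is non-negative (in fact strictly positive), which yields the claim.

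There is no real obstacle here, since the proof is a one-line algebraic manipulation once the factorization is spotted; the only thing to verify carefully is the expansion and the matching of exponents. As a bonus, the argument shows that the inequality is strict whenever $a,b \geq 1$, with equality occurring only in the degenerate cases $a=0$ or $b=0$, consistent with the multiplicativity statement in Lemma~\ref{sumdirect} (which, restricted to $p$-groups, would require $\gcd(p^{a},p^{b})=1$, impossible for $a,b>0$).
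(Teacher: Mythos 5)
Your proof is correct and follows exactly the paper's own argument: substitute the closed form $\psi(C_{p^n})=(p^{2n+1}+1)/(p+1)$, clear denominators, cancel, and reduce to the factorization $(p^{2a}-1)(p^{2b}-1)\geq 0$. The only difference is cosmetic (you keep the factor $p$ rather than dividing it out), and your observation that the inequality is strict for $a,b\geq 1$ is a correct bonus.
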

\begin{proof} We have
	\begin{align*}
\psi(C_{p^{a+b}}) & \geq \psi(C_{p^a})\psi(C_{p^b}) \Leftrightarrow \dfrac{p^{2a+2b+1}+1}{p+1} \geq \dfrac{p^{2a+1}+1}{p+1} \cdot \dfrac{p^{2b+1}+1}{p+1} \\
&\Leftrightarrow p^{2a+2b+2}+p+p^{2a+2b+1}+1 \geq p^{2a+2b+2} + p^{2b+1} + p^{2a+1}+1 \\
&\Leftrightarrow p^{2a+2b+1}+1 \geq  p^{2b} + p^{2a} \\
&\Leftrightarrow (p^{2a}-1)(p^{2b}-1) \geq  0, 
	\end{align*}
and we 	get the result.
\end{proof}
\begin{lemma} \label{2m}
(a)	Let $p \in \{2, 3, 5\}$  and $a>0$. Then $ p^{2a} >  \frac{13}{12} \psi(C_{p^a}) $.\\
(b) Let $\pi(m) \subseteq\{2, 3, 5\}$ and $m\geq 2$. Then $ m^2 >  \frac{13}{12}  \psi(C_m) $.
\end{lemma}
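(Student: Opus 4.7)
The plan is to do part (a) by direct computation with the closed form, then leverage it multiplicatively to handle (b).

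For (a), I would substitute $\psi(C_{p^a}) = (p^{2a+1}+1)/(p+1)$ and clear denominators. The inequality $p^{2a} > \frac{13}{12}\psi(C_{p^a})$ becomes $12 p^{2a}(p+1) > 13(p^{2a+1}+1)$, which after cancellation reduces to the single inequality
\[ p^{2a}(12-p) > 13. \]
For $p \in \{2,3,5\}$ we have $12 - p \in \{10, 9, 7\}$, all at least $7$, and $p^{2a} \geq p^2 \geq 4$ for $a \geq 1$. So the product is at least $4 \cdot 7 = 28 > 13$, which settles (a) by inspection, and the left-hand side only grows in $a$.

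For (b), I would write $m = 2^{a_1} 3^{a_2} 5^{a_3}$, where at least one exponent is positive because $m \geq 2$. Since Sylow subgroups of a cyclic group of coprime order interact multiplicatively, Lemma \ref{sumdirect} (or simply the direct formula for $\psi$ on cyclic groups) gives
\[ \psi(C_m) = \prod_{a_i > 0} \psi(C_{p_i^{a_i}}). \]
Applying part (a) to each factor yields $p_i^{2a_i} > \tfrac{13}{12}\psi(C_{p_i^{a_i}})$, and multiplying over the $k \geq 1$ active primes gives
\[ m^2 = \prod_{a_i > 0} p_i^{2a_i} > \Bigl(\tfrac{13}{12}\Bigr)^{k} \psi(C_m) \geq \tfrac{13}{12}\,\psi(C_m), \]
which is exactly (b).

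There is no real obstacle here: the whole proof is a routine calculation once one recognizes that the constant $13/12$ has been chosen so that the tightest case ($p=2$, $a=1$, giving $4$ versus $\tfrac{13}{12}\cdot 3 = 3.25$) is safely satisfied, and that the prime-by-prime inequality then bootstraps to arbitrary $m$ with $\pi(m)\subseteq\{2,3,5\}$ because each additional prime only strengthens the bound by another factor of $13/12$.
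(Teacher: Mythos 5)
Your proof is correct, and it follows the same overall strategy as the paper (establish the inequality prime by prime, then multiply across the coprime factors using Lemma \ref{sumdirect}), but it differs in two small, worthwhile ways. For part (a) the paper argues by induction on $a$, checking the base cases $4>3.25$, $9>7.59$, $25>22.75$ and then pushing the inequality from $a$ to $a+1$ via $p^{2a+3}+p^2>p^{2a+3}+1$; you instead clear denominators once and reduce everything to $p^{2a}(12-p)>13$, which is verified uniformly in $a$ by the bound $4\cdot 7=28>13$ --- a cleaner, induction-free argument that also makes transparent why the constant $\frac{13}{12}$ works. For part (b) your version is actually slightly more careful than the paper's: the paper writes $m=2^r3^s5^t$ and multiplies three instances of (a), which is not literally valid when one of the exponents is zero (since $1>\frac{13}{12}\psi(C_1)$ fails); restricting the product to the $k\geq 1$ primes actually dividing $m$, as you do, and then using $\left(\frac{13}{12}\right)^k\geq\frac{13}{12}$, repairs this. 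Both approaches are elementary and essentially equivalent in content, but yours is the tighter write-up.
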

\begin{proof} 
(a)	We proceed by induction on $a$. If $a=1$, then
	\[2^2 > \dfrac{13}{12}  \psi(C_2)=3.25, \quad 3^2 > \dfrac{13}{12}  \psi(C_3)\approx7.59,  \quad 5^2 > \dfrac{13}{12}  \psi(C_5)=22.75.\]
	Therefore by the inductive hypothesis we have
	\begin{align*}
	p^{2(a+1)}=p^{2a}p^2&> \dfrac{13}{12} \psi(C_{p^a})p^2=\dfrac{13}{12} \dfrac{p^{2a+1}+1}{p+1}p^2=\dfrac{13}{12}\dfrac{p^{2a+3}+p^2}{p+1}\\
	&>\dfrac{13}{12} \dfrac{p^{2a+3}+1}{p+1}= \dfrac{13}{12} \psi(C_{p^{a+1}}),
	\end{align*}
	as wanted.\\
(b)	If $m=2^r3^s5^t\geq2$, then by  (a) we have
\begin{align*}
m^2=2^{2r}3^{2s}5^{2t}&>\dfrac{13}{12} \psi(C_{2^r}) \dfrac{13}{12} \psi(C_{3^s}) \dfrac{13}{12} \psi(C_{5^t}) \\
 &> \dfrac{13}{12} \psi(C_{2^r})  \psi(C_{3^s})  \psi(C_{5^t})=\dfrac{13}{12} \psi(C_{m}),
\end{align*}
as wanted.
\end{proof}
We know that $h(6)=\frac{5005}{1152}$. Now we state a lemma similar to Lemmas 2.4 and 2.5 in \cite{herzog2018two}.
\begin{lemma} \label{herzog}
Let  $n = {p_1}^{\alpha_1} {p_2}^{\alpha_2} \cdots {p_r}^{\alpha_r}$  be a positive integer,
 where $p_{i}$ are primes, $p_1 < p_2 < \cdots < p_r = p$ and $\alpha_i>0$, for each $1 \leq i \leq r$.
 If $p \geq 13$, then
	$$\psi(C_n) \geq \dfrac{5005}{1152} \dfrac{n^2}{p+1}.$$
\end{lemma}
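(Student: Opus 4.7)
My plan is to reduce the inequality to two elementary pieces: multiplicativity of $\psi$ over coprime parts, followed by a product-of-ratios estimate whose worst-case value is exactly $h(6)$.

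First, since the factors $p_i^{\alpha_i}$ are pairwise coprime, iterating Lemma~\ref{sumdirect} together with the closed form $\psi(C_{p^\alpha}) = (p^{2\alpha+1}+1)/(p+1)$ gives
\[
\psi(C_n) \;=\; \prod_{i=1}^{r} \frac{p_i^{2\alpha_i+1}+1}{p_i+1} \;\geq\; \prod_{i=1}^{r} \frac{p_i^{2\alpha_i+1}}{p_i+1} \;=\; n^2 \prod_{i=1}^{r} \frac{p_i}{p_i+1}.
\]
So it suffices to show $\prod_{i=1}^{r} p_i/(p_i+1) \geq h(6)/(p+1)$ whenever $p = p_r \geq 13$.

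Next I would write $p = q_j$ with $j \geq 6$. The remaining primes $p_1,\dots,p_{r-1}$ are a subset of $\{q_1,\dots,q_{j-1}\}$, and since every factor $q/(q+1)$ is strictly less than $1$, omitting primes can only enlarge the product. Hence
\[
\prod_{i=1}^{r-1} \frac{p_i}{p_i+1} \;\geq\; \prod_{k=1}^{j-1} \frac{q_k}{q_k+1} \;=\; f(j-1),
\]
and multiplying by the $i = r$ factor $q_j/(q_j+1)$ yields
\[
\prod_{i=1}^{r} \frac{p_i}{p_i+1} \;\geq\; \frac{q_j\, f(j-1)}{q_j+1} \;=\; \frac{h(j)}{p+1}.
\]

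Finally, I would verify that $h$ is non-decreasing past index $2$: directly from the definition, $h(j+1)/h(j) = q_{j+1}/(q_j+1)$, which is at least $1$ because consecutive primes with $q_j \geq 3$ differ by at least $2$. Therefore $h(j) \geq h(6) = 5005/1152$ for every $j \geq 6$, which combined with the previous display delivers the target inequality. The only delicate point—hardly an obstacle—is to get the direction of inequality right in the monotonicity step: because each ratio is below $1$, the product is \emph{minimized} (not maximized) when the full initial segment of primes appears, which is what allows the $f(j-1)$ comparison to serve as a genuine lower bound.
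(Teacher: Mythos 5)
Your proof is correct and is essentially the argument the paper intends: the paper's "proof" merely defers to Lemmas 2.4 and 2.5 of Herzog et al., which use exactly this factorization $\psi(C_n)\geq n^2\prod_i p_i/(p_i+1)$ followed by the comparison with $f(j-1)$ and the monotonicity of $h$. Your write-up fills in the details cleanly, including the correct direction of the subset-of-primes comparison and the computation $h(6)=5005/1152$.
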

\begin{proof}
	The proof is similar to the proof of Lemmas 2.4 and 2.5 in \cite{herzog2018two}.
\end{proof}

\section{\bf  Proof of the Conjecture}

\begin{proof}[Proof of the conjecture]
	We prove by induction on $|\pi(G)|$ that 
	if $G$ is a group of order $n = {p_1}^{\alpha_1} {p_2}^{\alpha_2} \cdots {p_r}^{\alpha_r}$,
	where $p_{i}$ are primes, $p_1 < p_2 < \cdots < p_r = p$ such that $\alpha_i>0$, for each $1 \leq i \leq r$, and also
	 $\frac{211}{1617}\psi(C_n)<\psi(G)$, then $G$ is a solvable group.
	 
	 If $|\pi(G)|=1$, then $G$ is a $p$-group, therefore $G$ is solvable.
	 If $|\pi(G)|=2$, then by Burnside's $p^aq^b$-theorem  $G$ is solvable.
Assume that $|\pi(G)|\geq3$ and the theorem holds for each group $H$ such that $|\pi(H)|<|\pi(G)|$. Now we consider the following two cases:

\textbf{Case(I).}
 If $G$ has a  normal cyclic Sylow subgroup $Q$, then
by Lemma \ref{sumsylow}  we have
$\psi(G) \leq \psi(Q)\psi(G/Q)$. Using Lemma \ref{sumdirect} and the assumptions we have
\[\dfrac{211}{1617} \psi(C_{|G/Q|})\psi(C_{|Q|})=\dfrac{211}{1617} \psi(C_n)<\psi(G)\leq\psi(Q)\psi(G/Q)=\psi(C_{|Q|})\psi(G/Q).\]
Therefore $\frac{211}{1617}\psi(C_{|G/Q|})<\psi(G/Q)$ and $|\pi(G/Q)|<|\pi(G)|$. By the inductive hypothesis, $G/Q$ is solvable
and so $G$ is a solvable group. 

\textbf{Case(II).} Let $G$ have no normal cyclic Sylow  subgroup.

We note that if there exists $x\in G$ such that $|G:\langle x\rangle|<2p$, then $G$ is solvable, by Lemma \ref{2p}.

	If  $p\geq13$, then  by Lemma \ref{herzog}, we have 
	\[\psi(G) > \dfrac{211}{1617}\psi(C_n) >\dfrac{211}{1617} \cdot \dfrac{5005}{1152} \dfrac{n^2}{p+1}. \]
	Thus	there exists $x \in G$ such that $o(x)>\frac{211}{1617} \cdot\frac{5005}{1152} \frac{n}{p+1}$. Therefore
	\[[G:\langle x \rangle] < \dfrac{1617}{211} \cdot \dfrac{1152}{5005} (p+1) \leq \dfrac{1617}{211} \cdot \dfrac{1152}{5005} \cdot \dfrac{14}{13} p<2p, \]
	and by the above discussion $G$ is a solvable group.

Therefore $\pi(G) \subseteq \{2, 3, 5, 7, 11\}$, where $ 3 \leq|\pi(G)| \leq 5$. 
By Feit-Thompson, theorem every finite group of odd order is solvable, and so in the sequel,  we  assume that $2\in \pi(G)$. 

If $3 \notin \pi(G)$, then let $R$ be the solvable radical of $G$. If $R \ne G$, then $G/R$ is non-solvable and so there exists a non-Abelian simple  group $S$ such that $|S|$ is a divisor of $|G|$.
Since the Suzuki groups are the only non-Abelian simple groups whose orders are prime to $3$ and we know that there exists no Suzuki simple group $S$, where $\pi(S) \subseteq \{2, 5, 7, 11\}$,
we get a contradiction and so $G$ is a solvable group. So in the sequel we assume that $3 \in \pi(G)$.
Thus $\{2,3\} \subseteq \pi(G) \subseteq  \{2, 3, 5, 7, 11\}  $. 
Now we consider the following cases:
\\
\textbf{Case 1.} Let $\pi(G)=\{2, 3, 5\}$. Then $|G|=2^{\alpha_1}3^{\alpha_2}5^{\alpha_3}$. In this case we have
	\begin{align} \label{sum235}
	\psi(G)>\dfrac{211}{1617}\psi(C_{|G|})>\dfrac{211}{1617}\cdot \dfrac{2^{2{\alpha_1}+1}}{2+1} \cdot
	\dfrac{3^{2{\alpha_2}+1}}{3+1} \cdot \dfrac{5^{2{\alpha_3}+1}}{5+1}=\dfrac{211}{1617} \cdot \dfrac{5}{12}n^2.
	\end{align}
	It follows  that there exists $x\in G $ such that $o(x)>\frac{211}{1617} \cdot \frac{5}{12}n$. We conclude that
	$|G:\langle x \rangle|<\frac{1617}{211} \cdot \frac{12}{5}<19$. By Lemma \ref{2p}, we have $[G:\langle x \rangle]=10, 12, 15, 16$ or $18$.
	Now we consider each possibility for $[G:\langle x \rangle]$.
	\begin{itemize}
		\item Let $[G:\langle x \rangle]=10$.
		If $P_3 \in \rm{Syl}_3(G)$, then $ P_3 \leq  \langle x \rangle$ and so $P_3$  is a cyclic Sylow
		$3$-subgroup of $G$. Then
		 $\langle x \rangle \leq N_G(P_3)$ and \[10 = [G : N_G(P_3)][N_G(P_3) : \langle x \rangle] = (1 + 3k)[N_G(P_3) : \langle x \rangle].\]
		Since $P_3$ is not a normal subgroup of $G$,
		 we have $k = 3$ and $N_G(P_3) = \langle x \rangle$.
		 We claim that $\langle x \rangle$ is a maximal subgroup of $G$.
		  If $L$ is a subgroup of $G$ such
		that $N_G(P_3) < L$, then $N_L(P_3)=N_G(P_3) \cap L= N_G(P_3)$. Since $[L : N_L(P_3)] = 1 + 3m>1$ and
		 $10 = [G :  N_L(P_3)] =[G : L][L :  N_L(P_3)]$, we have $[G : L] = 1$ and so $G = L$. Hence $ \langle x \rangle$ is an Abelian maximal subgroup
		of $G$. By Lemma \ref{herstein}, $G$ is a solvable group.
		\item Let $[G:\langle x \rangle]=12$.
			If $P_5 \in \rm{Syl}_5(G)$, then $P_5 \leq  \langle x \rangle$ and so  is a cyclic Sylow
			$5$-subgroup of $G$. Then
			 $\langle x \rangle \leq N_G(P_5)$ and $$12 = [G : N_G(P_5)][N_G(P_5) : \langle x \rangle] = (1 + 5k)[N_G(P_5) : \langle x \rangle].$$
			Since $P_5$ is not a normal subgroup of $G$, $[G : N_G(P_5)]=6$ and $[N_G(P_5) : \langle x \rangle]=2$.
	Thus we have the following  series:
	\[	H={\rm core}_G(\langle x \rangle) \leq \langle x \rangle \leq N_G(P_5) \leq G.	\]	
	If $G/H$ is a solvable group, then we get the result. So let $G/H$ be a non-solvable group.
	By Lemma \ref{lucchini}, $[\langle x \rangle : H] < [G : \langle x \rangle] = 12$. 
	Also, $P_5$ is not a normal subgroup of $G$ and so $P_5 \nleq H$, i.e.
	 $ 5 \mid [\langle x \rangle : H]$.  Hence  either
	$ [\langle{x}\rangle: H] = 5$ or
	$[\langle{x}\rangle : H] = 10$. 
\\	
$\blacktriangleright$	
	If $[\langle{x}\rangle : H] = 5$, then $|G/H| = 60$ and so $G/H\cong A_5$. Therefore $H$ is a maximal normal subgroup of $G$ and $H\leq C_G(H)$. On the other hand,
	$C_G(H)\trianglelefteq N_G(H)=G$. By the definition of maximal normal subgroup, we have either $C_G(H)=H$ or $C_G(H)=G$.

	If $C_G(H)=H$, then by NC-theorem
	\[N_G(H)/C_G(H)=G/H\cong A_5 \hookrightarrow {\rm Aut(H)},\] which is a contradiction, since $H$ is cyclic.
	
	If $C_G(H)=G$, then $H\leq Z(G)$. Since $H$ is a maximal normal subgroup of $G$ and $G/H$ is non-solvable it follows that $H=Z(G)$
and so  $G/Z(G)\cong A_5$.  Therefore $G=G^{\prime}Z(G)$ and
	 \begin{align}
\dfrac{G^{\prime}Z(G)}{Z(G)}  \cong A_5  \Longrightarrow \dfrac{G^{\prime}}{G^{\prime} \cap Z(G)} \cong A_5 \label{c3}.
	 \end{align}
	
On the other hand,	 using Lemma \ref{isaa}, $G^{\prime}$ is perfect and $G^{\prime}/Z(G^{\prime})\cong G/Z(G)\cong A_5$.	
	Therefore $G^{\prime}$ is a central
	extension of $Z(G^{\prime})$ by $A_5$. Since the Shur multiplier of $A_5$ is $2$ and $G^{\prime}$ is perfect, we get that
	 either $G^{\prime}\cong A_5$ or $G^{\prime}\cong {\rm SL}(2,5)$.
Now, we consider two cases:
	\begin{itemize}
		\item Let $G^{\prime} \cong A_5$. Using (\ref{c3}), we have $G^{\prime} \cap Z(G)=1$, hence
		$G\cong G^{\prime} \times Z(G) \cong A_5 \times Z(G) $ and $|Z(G)|=2^{{\alpha_1}-2}3^{{\alpha_2}-1}5^{{\alpha_3}-1}$.
		By Lemmas \ref{sumdirect} and \ref{pa}, we have
		\begin{align*}
	\qquad \qquad	\psi(G) &= \psi(A_5 \times Z(G)) \leq \psi(A_5) \psi(Z(G)) \\
	&=211 \psi(C_{2^{{\alpha_1}-2}}) \psi(C_{3^{{\alpha_2}-1}}) \psi(C_{5^{{\alpha_3}-1}}) \\
		&\leq 211 \dfrac{\psi(C_{2^{{\alpha_1}}})}{\psi(C_{4})} \dfrac{C_{\psi(3^{{\alpha_2}}})}{\psi(C_{3})} \dfrac{\psi(C_{5^{{\alpha_3}}})}{\psi(C_{5})}= \dfrac{211}{1617} \psi(C_{n}),
		\end{align*}
		
		which is a contradiction.
		
		\item Let $G^{\prime}\cong {\rm SL}(2,5)$. Then, using (\ref{c3}), we have $|G^{\prime} \cap Z(G)|=2$. Therefore
		\begin{align*}
\qquad	\qquad	\dfrac{G}{G^{\prime}\cap Z(G)} \cong \dfrac{G^{\prime}Z(G)}{G^{\prime}\cap Z(G)} \cong
		 \dfrac{G^{\prime}}{G^{\prime}\cap Z(G)} \times \dfrac{Z(G)}{G^{\prime}\cap Z(G)} \cong A_5 \times C_m,
		\end{align*}
		where $C_m$ is a cyclic group of order $m=n/120$. Thus Lemmas \ref{prop} and \ref{sumdirect}, imply that
		\begin{align*}
	\qquad \qquad	\psi(G) &\leq \psi(\dfrac{G}{G^{\prime}\cap Z(G)})|G^{\prime}\cap Z(G)|^2 =4 \psi(A_5 \times C_m)\\
		        & \leq 4 \psi(A_5)\psi(C_m) =4\cdot 211 \psi(C_{m}),
		\end{align*}
		Using (\ref{sum235}), $	\psi(G)>\frac{211}{1617}\cdot  \frac{5}{12} n^2=\frac{211}{1617}\cdot  \frac{5}{12} 120^2m^2$. Therefore
		\begin{align*}
\qquad \qquad
		\dfrac{211}{1617}\cdot  \dfrac{5}{12} 120^2m^2 & < 4\cdot 211 \psi(C_{m}) \Rightarrow 5\cdot120^2m^2<4\cdot 1617 \cdot 12 \psi(C_{m}) \\ 
		&\Rightarrow 72000 m^2< 77616 \psi(C_{m}) < 78000 \psi(C_{m}).
		\end{align*}
		Therefore  $12 m^2< 13 \psi(C_{m})$. Using Lemma \ref{2m}, we have $m=1$. Hence $|G|=120$. On the other hand,
				$G^{\prime}\cong {\rm SL}(2,5)$, it follows that $G\cong {\rm SL}(2,5)$, thus $\psi(G)=\psi({\rm SL}(2,5))=663$. Hence by our assumptions we have
	$663> \frac{211}{1617}\psi(120)>824$, which is a contradiction.	
	\end{itemize}
	$\blacktriangleright$	If $[\langle{x}\rangle : H] = 10$, then $G/H$ is a non-solvable group of order $120$. Therefore using the list of
	such groups ($\rm{SL}(2,5), S_5, C_2 \times A_5$) and their
	$\psi$-values ($663, 471, 603$), we have $\psi(G/H) \leq 663$. By Lemma \ref{prop} we have
	$\psi(G) \leq \psi(G/H)|H|^2 \leq 663 (n/120)^2$.
	Using (\ref{sum235}), $	\psi(G)>\frac{211}{1617}\cdot  \frac{5}{12} n^2$. Therefore
	\[\dfrac{211}{1617}\cdot  \dfrac{5}{12} n^2< 663 \dfrac{n^2}{120^2},\]
	which is a contradiction.	
 \item Let $[G:\langle x \rangle]=15$. Then a Sylow
 $2$-subgroup of $G$ is cyclic and hence $G$
 has a normal $2$-complement, therefore by the Feit-Thompson Theorem $G$
  is solvable.

\item Let $[G:\langle x \rangle]=16$. Then by Lemma \ref{solpower}, $G$ is a solvable group.

 \item Let $[G:\langle x \rangle]=18$. 
 	If $P_5 \in \rm{Syl}_5(G)$, then $P_5 \leq  \langle x \rangle$. Therefore  $\langle x \rangle \leq N_G(P_5)$ and 
 	 $18 = [G : N_G(P_5)][N_G(P_5) : \langle x \rangle] = (1 + 5k)
 [N_G(P_5) : \langle x \rangle]$.
 Since $P_5 \ntrianglelefteq G$, we have $k = 1$, thus $[G : N_G(P_5)]=6$ and $[N_G(P_5) : \langle x \rangle]=3$.
 Let $ H={\rm core}_G(\langle x \rangle)$. By Lemma \ref{lucchini}, $[\langle x \rangle : H] < [G : \langle x \rangle] = 18$.
 Also,
 	$ 5 \mid [\langle x \rangle : H]$, because $P_5 \ntrianglelefteq G$. 
 	If $G/H$ is a solvable group, then we get the result. Let $G/H$ be non-solvable. Therefore $ [\langle{x}\rangle: H] =  10$ and $|G/H|=180$. Thus $G/H\cong {\rm GL}(2,4)$ and we have $\psi(G/H)=\psi({\rm GL}(2,4))=1237$. Thus Lemma \ref{prop}  implies that
 	$ 		\psi(G) \leq \psi(G/H)|H|^2 =1237(n/180)^2. 	$
 	Using    (\ref{sum235}), 
 	we have the following contradiction:
 	\[ 	\dfrac{211}{1617} \cdot \dfrac{5}{12} n^2 < 1237(\dfrac{n}{180})^2.\]
	\end{itemize}
So if $\pi(G)=\{2, 3, 5\}$, then $G$ is solvable.
\\
\textbf{Case 2.} Let $\pi(G)=\{2, 3, 7\}$. Then
	\begin{align} \label{sum237}
	\psi(G)>
	\dfrac{211}{1617}\dfrac{2^{2{\alpha_1}+1}}{2+1}
	\dfrac{3^{2{\alpha_2}+1}}{3+1}\dfrac{7^{2{\alpha_3}+1}}{7+1}=\dfrac{211}{1617} \cdot \dfrac{7}{16} n^2.
	\end{align}
	Therefore there exists $x\in G $ such that $o(x)>\frac{211}{1617} \cdot \frac{7}{16} n$. We obtain that
	$[G:\langle x \rangle]<\frac{1617}{211} \cdot \frac{16}{7} <18$. Using Lemma \ref{2p}, we have $|G:\langle x \rangle|=14$ or $16$.
	\begin{itemize}
		\item Let $[G:\langle x \rangle]=16$. Using Lemma \ref{solpower}, we get that $G$ is  solvable.
		\item Let $[G:\langle x \rangle]=14$.
		If $P_3$  is a  Sylow
		$3$-subgroup of $G$, then $\langle x \rangle \leq N_G(P_3)$ and $14 = [G : N_G(P_3)][N_G(P_3) : \langle x \rangle] = (1 + 3k)
		[N_G(P_3) : \langle x \rangle]$.
		We have $k = 2$, thus $[G : N_G(P_3)]=7$ and $[N_G(P_3) : \langle x \rangle]=2$.
		 Let $ H={\rm core}_G(\langle x \rangle)$.
		 By Lemma \ref{lucchini}, $[\langle x \rangle : H] < [G : \langle x \rangle] = 14$. 
		 Since $P_3 \ntrianglelefteq G$,  $ 3 \mid [\langle x \rangle : H]$.  Hence 
		 $ [\langle{x}\rangle: H] = 3, 6, 9$ or $12$.
		 If $G/H$ is a solvable group, then we get the result. Let $G/H$ be non-solvable. Therefore $ [\langle{x}\rangle: H] =  12$ and $|G/H|=168$. Thus $G/H\cong {\rm PSL}(2,7)$ and we have $\psi(G/H)=\psi({\rm PSL}(2,7))=715$. Thus Lemma \ref{prop}  implies that
		 $
		 \psi(G) \leq \psi(G/H)|H|^2 =715(n/168)^2.
		 $
		  	Using    (\ref{sum237}), 
		  	we have the following contradiction:
		 \begin{align*}
		 \dfrac{211}{1617} \cdot \dfrac{7}{16}n^2 < 715(\dfrac{n}{168})^2.
		 \end{align*}
\end{itemize}
So if $\pi(G)=\{2, 3, 7\}$, then $G$ is solvable.
\\
\textbf{Case 3.} Let $\pi(G)=\{2, 3, 5, 7\}$. Then by Lemma \ref{aval},
	there exists $x\in G $ such that 
	$[G:\langle x \rangle]<\frac{1617}{211} \cdot \frac{96}{35}<22$. Using Lemma \ref{2p}, we have
	 $[G:\langle x \rangle]=14, 15, 16, 18, 20$ or $21$. Let $P_5 \in {\rm  Syl}_5(G)$ and $P_7 \in {\rm  Syl}_7(G)$.
	\begin{itemize}
		\item Let $[G:\langle x \rangle]=14$. Then $P_5 \leq \langle x \rangle$ and $14 = [G : N_G(P_5)][N_G(P_5) : \langle x \rangle]$, 
		which is impossible since $P_5 \ntrianglelefteq G$.
		\item Let $[G:\langle x \rangle]=15$. Then $P_7 \leq \langle x \rangle$ and $P_7 \ntrianglelefteq  G$.
		Therefore 
		$$15 = [G : N_G(P_7)][N_G(P_7) : \langle x \rangle]= (1 + 7k)[N_G(P_7) : \langle x \rangle].$$
		Thus  $k=2$, which is impossible by Lemma \ref{sylow}.
		\item Let $[G:\langle x \rangle]=16$. Then by Lemma \ref{solpower}, we have $G$ is  solvable.
		\item Let $[G:\langle x \rangle]=18$ or $20$.  It is impossible, because $P_7 \leq \langle x \rangle$ and $P_7 \ntrianglelefteq  G$.
		\item Let $[G:\langle x \rangle]=21$. Then $P_5 \leq \langle x \rangle$ and 
		$$21 = [G : N_G(P_5)][N_G(P_5) : \langle x \rangle] =(1 + 5k)	[N_G(P_5) : \langle x \rangle].$$
		 We obtain that $n_5=21$ and by Lemma \ref{21}, we get a contradiction.
	\end{itemize}
Therefore $G$ is solvable, when $\pi(G)=\{2, 3, 5, 7\}$.\\
\textbf{Case 4.} Let $\pi(G)=\{2, 3, 11\}$, $\pi(G)=\{2, 3, 5, 11\}$ or $\pi(G)=\{2, 3, 7, 11\}$. Then by Lemma \ref{aval}, there exists $x \in G$ such that
 $[G:\langle x \rangle]<21$. Using Lemma \ref{2p}, $G$ is a solvable group. \\
\textbf{Case 5.} Let $\pi(G)=\{2, 3, 5, 7, 11\}$.
 Then by Lemma \ref{aval}, there exists $x \in G$ such that
$$[G:\langle x \rangle]< \dfrac{1617}{211} \cdot \dfrac{3}{2} \cdot \dfrac{4}{3} \cdot \dfrac{6}{5} \cdot \dfrac{8}{7} \cdot \dfrac{12}{11}<23.$$
Using Lemma \ref{2p}, we have  $[G:\langle x \rangle]=22$. Let $P_7 \in \rm{Syl}_7(G)$. Therefore 
\[ [G:\langle x \rangle]= [G:N_G(P_7)][N_G(P_7):\langle x \rangle]=(1+7k)[N_G(P_7):\langle x \rangle]=22. \]
Hence $k=3$  and by Lemma \ref{sylow}, we get a contradiction.

 The proof is now complete.
\end{proof}
About the equality in \cite[Conjecture 6]{herzog2018two},  we give the following remark:
\begin{remark}
	We note that if
	$G=A_5 \times C_m$, where $\gcd(30,m)=1$, then by Lemma \ref{sumdirect}, we have
	$\psi(G)=\psi(A_5 \times C_m)=\psi(A_5)\psi(m).$
	On the other hand,
	\[\dfrac{211}{1617}\psi(C_{60 m})=\dfrac{\psi(A_5)}{\psi(C_{60})}\psi(C_{60})\cdot \psi(C_{m})=\psi(A_5)\psi(C_{m}).\]
	Therefore $\psi(G)=\dfrac{211}{1617}\psi(C_{|G|})$.
\end{remark}

\end{document}